\theoremstyle{plain}
\newtheorem{thm}{Theorem}
\newtheorem{lem}[thm]{Lemma}
\newtheorem{prop}[thm]{Proposition}
\theoremstyle{definition}
\newtheorem{rem}{Remark}
\newproof{pf}{Proof}
\newproof{pot}{Proof of Theorem \ref{Theoremeprincipal}}
\newproof{pop}{Proof of Proposition \ref{key coeff non nul}}
\begin{document}

\begin{frontmatter}

\title{A matrix description of weakly bipartitive and bipartitive families }

\author[B]{Edward Bankoussou-mabiala }
\ead{bankoussoumabiala@yahoo.fr}

\author[B]{Abderrahim  Boussa\"{\i}ri\corref{cor1}}
\ead{aboussairi@hotmail.com}

\author[B]{\\Abdelhak Cha\"{\i}cha\^{a}}
\ead{abdelchaichaa@gmail.com}

\author[B]{Brahim Chergui}
\ead{cherguibrahim@gmail.com}

\cortext[cor1]{Corresponding author}
%\cortext[cor2]{Principal corresponding author}

\address[B]{Facult\'e des Sciences A\"{\i}n Chock, D\'epartement de
Math\'ematiques et Informatique  Laboratoire de Topologie, Alg\`{e}bre, G\'{e}om\'{e}trie et Math\'{e}matiques discr\`{e}tes

Km 8 route d'El Jadida,
BP 5366 Maarif, Casablanca, Maroc}

\begin{abstract}
The notions of weakly bipartitive and bipartitive families were introduced by
Montgolfier (2003) as a general tool for studying some decomposition of graphs
and other combinatorial structures. In this paper, we give a matrix
description of these notions.

\end{abstract}

\begin{keyword}
Graph; Modular decomposition; Bipartitive families; matrices.

\MSC 	05C20, 15A03
\end{keyword}

\end{frontmatter}

%%%%%%%%%%%%%%%%%%%%%%%%%%%%%%%%%%%%%%%%%%%%%%%%%%%%%%%%%%%%%%%%%%%%%%%%%%%%%%%%%%%%%
%%%%%%%%%%%%%%%%%%%%%%%%%%%%%%%%%%%    Introduction       %%%%%%%%%%%%%%%%%%%%%%%%%%%%%%%%%%%%%%%%%
%%%%%%%%%%%%%%%%%%%%%%%%%%%%%%%%%%%%%%%%%%%%%%%%%%%%%%%%%%%%%%%%%%%%%%%%%%%%%%%%%%%%%%

\section{Introduction}

Modular decomposition has arisen as a technique that applies to many
combinatorial structures such as graphs, tournaments, 2-structures,
hypergraphs, and matroids, among others. It is based on module. For graphs, this notion goes back to Gallai \cite{ Gallai67}. More precisely, let $G=(V,E)$ be an undirected simple
graph. A \emph{module} of $G$ is a set $M\subseteq V$ such that for all $x$
$\in V\setminus M$ either $N_{G}(x)\cap M=\emptyset$ or $M\subseteq N_{G}(x)$,
where $N_{G}(x)$ is the \emph{neighborhood} of $x$, that is, $N_{G}%
(x):=\left \{  y\in V:\left \{  x,y\right \}  \in E\right \}  $. For tournaments,
the notion of module can be defined in a similar way. Recall that a
\emph{tournament} is a directed graph such that for every distinct vertices
$x$ and $y$, either $x\longrightarrow y$ or $y\longrightarrow x$ and never
both. Let $T$ be a tournament with vertex set $V$. The \emph{out-neighborhood}
of a vertex $x\in V$ is the set $N_{T}^{+}(x)=\{y\in V:x\longrightarrow y\}$
and the \emph{in-neighborhood} is $N_{T}^{-}(x)=\{y\in V:y\longrightarrow
x\}$. A module of $T$ is a set $M$ $\subseteq V$ such that for all $x$ $\in
V\setminus M$ either $N_{T}^{+}(x)\cap M=\emptyset$ or $M\subseteq N_{T}%
^{+}(x)$.

The split decomposition\ of graphs and the bi-join decomposition of graphs and
of tournaments can be seen as a generalization of the modular decomposition.
These decompositions were introduced respectively by Cunningham \cite{Cun82}
and Montgolfier \cite{Mon03}. Let $G=(V,E)$ be an undirected simple graph and
let $\left \{  X,Y\right \}  $ be a bipartition of $V$. We say that $\left \{
X,Y\right \}  $ is a \emph{split} of $G$ if there exist $X_{1}\subseteq X$ and
$Y_{1}\subseteq Y$ such that for all $x\in X_{1}$, $N_{G}(x)\cap Y=Y_{1}$ and
for all $x\in X\setminus X_{1}$, $N_{G}(x)\cap Y=\emptyset$. We say that
$\left \{  X,Y\right \}  $ is a \emph{bi-join }of $G$ if there exist
$X_{1}\subseteq X$ and $Y_{1}\subseteq Y$ such that for all $x\in X_{1}$,
$N_{G}(x)\cap Y=Y_{1}$ and for all $x\in X\setminus X_{1}$, $N_{G}(x)\cap$ $Y$
$=Y\setminus Y_{1}$. Remark that if $X$ or $Y$ is a module of $G$ then
$\left \{  X,Y\right \}  $ is both a split \ and a bi-join of $G$. The notion of
bi-join can be also defined for tournaments in the following way.
Let $T$ be a tournament with vertex set $V$. A bipartition $\left \{
X,Y\right \}  $ of $V$ is a \emph{bi-join }of $T$ if there exist $X_{1}%
\subseteq X$ and $Y_{1}\subseteq Y$ such that for all $x\in X_{1}$ (resp.
$x\in X\setminus X_{1}$), $N_{T}^{+}(x)\cap Y=Y_{1}$ and $N_{T}^{-}(x)\cap
Y=Y\setminus Y_{1}$ (resp. $N_{T}^{+}(x)\cap Y=Y\setminus Y_{1}$ and
$N_{T}^{-}(x)\cap$ $Y$ $=Y_{1}$).

%%%%%%%%%%%%%%%%%%%%%%%%%%%%%%%%%%%%%%%%%%%%%%%%%%%%%%%%%%%%%%%%%%%%%%%%%%%%%%%%%%%%%%%%%%
%%%%%%%%%%%%%%%%%%%%%%%%%%%%%%%   Fig 1   A split in a graph   %%%%%%%%%%%%%%%%%%%%%%%%%%%%%%%%%%%%%
%%%%%%%%%%%%%%%%%%%%%%%%%%%%%%%%%%%%%%%%%%%%%%%%%%%%%%%%%%%%%%%%%%%%%%%%%%%%%%%%%%%%%%%%%%%%
\begin{figure}[h]
\begin{center}
\setlength{\unitlength}{0.7cm}
\begin{picture}(30,2.6)
\thicklines
\put(8,1){\oval(2,1)}
\put(7.1,0.8){$X\setminus X_{1}$}
\put(8,3){\oval(2,1)}
\put(7.5,2.8){$X_{1}$}
\put(12,1){\oval(2,1)}
\put(11.1,0.8){$Y\setminus Y_{1}$}
\put(12,3){\oval(2,1)}
\put(11.5,2.8){$Y_{1}$}

\put(9,3){\line(1,0){2.0}}

\put(8,1.6){\circle*{0.1}}
\put(8,1.8){\circle*{0.1}}
\put(8,2){\circle*{0.1}}
\put(8,2.2){\circle*{0.1}}
\put(8,2.4){\circle*{0.1}}

\put(12,1.6){\circle*{0.1}}
\put(12,1.8){\circle*{0.1}}
\put(12,2){\circle*{0.1}}
\put(12,2.2){\circle*{0.1}}
\put(12,2.4){\circle*{0.1}}

\put(6,-0.5){ Figure 1 : A split in a graph}

\end{picture}
\end{center}

%\caption{A split in a graph   }
\end{figure}

%%%%%%%%%%%%%%%%%%%%%%%%%%%%%%%%%%%%%%%%%%%%%%%%%%%%%%%%%%%%%%%%%%%%%%%%%%%%%%%%%%%%%%%%%%%%%
%%%%%%%%%%%%%%%%%%%%%%%%%%%%%%%%%%%%%%%%%%%%%%%%%%%%%%%%%%%%%%%%%%%%%%%%%%%%%%%%%%%%%%%%%%

%%%%%%%%%%%%%%%%%%%%%%%%%%%%%%%%%%%%%%%%%%%%%%%%%%%%%%%%%%%%%%%%%%%%%%%%%%%%%%%%%%%%%%%%%%
%%%%%%%%%%%%%%%%%%%%%%%%%%%%%%%   Fig 2   A bijoin in a graph and in a tournament   %%%%%%%%%%%%%%%%%%%%%%%%%%%%%%%%%%%%%
%%%%%%%%%%%%%%%%%%%%%%%%%%%%%%%%%%%%%%%%%%%%%%%%%%%%%%%%%%%%%%%%%%%%%%%%%%%%%%%%%%%%%%%%%%%%
\begin{figure}[h]
\begin{center}
\setlength{\unitlength}{0.7cm}
\begin{picture}(30,3)
\thicklines

\put(3,1){\oval(2,1)}
\put(2.1,0.8){$X\setminus X_{1}$}
\put(3,3){\oval(2,1)}
\put(2.5,2.8){$X_{1}$}
\put(7,1){\oval(2,1)}
\put(6.1,0.8){$Y\setminus Y_{1}$}
\put(7,3){\oval(2,1)}
\put(6.5,2.8){$Y_{1}$}
\put(4,3){\line(1,0){2.0}}
\put(4,1){\line(1,0){2.0}}

\put(3,1.6){\circle*{0.1}}
\put(3,1.8){\circle*{0.1}}
\put(3,2){\circle*{0.1}}
\put(3,2.2){\circle*{0.1}}
\put(3,2.4){\circle*{0.1}}

\put(7,1.6){\circle*{0.1}}
\put(7,1.8){\circle*{0.1}}
\put(7,2){\circle*{0.1}}
\put(7,2.2){\circle*{0.1}}
\put(7,2.4){\circle*{0.1}}

%%%%%%%%%%%%%%%%%%%%%%%
\put(13,1){\oval(2,1)}
\put(12.1,0.8){$X\setminus X_{1}$}
\put(13,3){\oval(2,1)}
\put(12.5,2.8){$X_{1}$}
\put(17,1){\oval(2,1)}
\put(16.5,2.8){$Y_{1}$}
\put(17,3){\oval(2,1)}
\put(16.1,0.8){$Y\setminus Y_{1}$}
\put(14,1){\vector(1,0){2.0}}
\put(14,3){\vector(1,0){2.0}}

\put(16.1,2.6){\vector(-2,-1){2.3}}
\put(16.1,1.4){\vector(-2,1){2.3}}

\put(13,1.6){\circle*{0.1}}
\put(13,1.8){\circle*{0.1}}
\put(13,2){\circle*{0.1}}
\put(13,2.2){\circle*{0.1}}
\put(13,2.4){\circle*{0.1}}

\put(17,1.6){\circle*{0.1}}
\put(17,1.8){\circle*{0.1}}
\put(17,2){\circle*{0.1}}
\put(17,2.2){\circle*{0.1}}
\put(17,2.4){\circle*{0.1}}

\put(3.5,-0.5){Figure $2$ : A bi-join in a graph and in a tournament  }

\end{picture}
\end{center}
%\caption{A bi-join in a graph and in a tournament  }
\end{figure}

%%%%%%%%%%%%%%%%%%%%%%%%%%%%%%%%%%%%%%%%%%%%%%%%%%%%%%%%%%%%%%%%%%%%%%%%%%%%%%%%%%%%%%%%%%%%%
%%%%%%%%%%%%%%%%%%%%%%%%%%%%%%%%%%%%%%%%%%%%%%%%%%%%%%%%%%%%%%%%%%%%%%%%%%%%%%%%%%%%%%%%%%

Bipartitive families are a general tool for studying both split decomposition
and bi-join decomposition. They were introduced by Montgolfier  \cite{Mon03} as follows. Let $V$ be a nonempty set. Two bipartitions $\left \{  X,Y\right \}  $ and
$\left \{  X^{\prime},Y^{\prime}\right \}  $ of $V$ \emph{overlap} if $X\cap Y$,
$X\cap Y^{^{\prime}}$, $X^{^{\prime}}\cap Y$ and $X^{^{\prime}}\cap
Y^{^{\prime}}$ are nonempty. A family $\mathcal{F}$ of bipartitions of $V$ is
\emph{weakly bipartitive} if:

\begin{description}
\item[Q1)] for all $v$ $\in$ $V$, $\left \{  \left \{  v\right \}  ,V\setminus
\left \{  v\right \}  \right \}  $ is in $\mathcal{F}$.

\item[Q2)] for all $\left \{  X,Y\right \}  $ and $\left \{  X^{\prime}%
,Y^{\prime}\right \}  $ in $\mathcal{F}$ such that $\left \{  X,Y\right \}  $
overlaps $\left \{  X^{\prime},Y^{\prime}\right \}  $, the four bipartitions
$\left \{  X\cap X^{\prime},Y\cup Y^{\prime}\right \}  $, $\left \{  X\cap
Y^{^{\prime}},Y\cup X^{^{\prime}}\right \}  $, $\left \{  Y\cap X^{\prime},X\cup
Y^{\prime}\right \}  $ and $\left \{  Y\cap Y^{\prime},X\cup X^{\prime}\right \}
$ are in $\mathcal{F}$.
\end{description}

A weakly bipartitive family $\mathcal{F}$ is \emph{bipartitive} if
it satisfies the following additional condition:

\begin{description}
\item[Q3)] for all $\left \{  X,Y\right \}  $ and$\left \{  X^{\prime},Y^{\prime
}\right \}  $ which overlap in $\mathcal{F}$, $\left \{  X\Delta X^{\prime
},X\Delta Y^{\prime}\right \}  $ is in $\mathcal{F}$.
\end{description}

Cunningham \cite{Cun82} proved that the family of splits of a connected graph
is bipartitive. The same result was obtained for the family of bi-joins of a
graph by Montgolfier \cite{Mon03}. For tournaments, the family of bi-joins is
only weakly bipartitive.

We will present now another important example of weakly bipartitive family which comes from the works of
\ Hartfiel and Loewy \cite{HL} and of Loewy \cite{Lw86}. Let $A=[a_{ij}]_{1\leq i,j\leq \text{ }n}$ be a
$n\times n$ matrix with entries in a field $\mathbb{K}$ and let $X,Y$\ be two
nonempty subsets of $\left[  n\right]  $ (where $\left[  n\right]  :=\left \{
1,\ldots,n\right \}  $). We denote by $A[X,Y]$ the submatrix of $A$ having row
indices in $X$ and column indices in $Y$. The matrix $A$
is\emph{\ irreducible} if for any proper subset $X$ of $\left[  n\right]  $,
both of matrices $A[X,\left[  n\right]  \setminus X]$ and $A[\left[  n\right]
\setminus X,X]$ are nonzero. An \emph{HL-bipartition} of $A$ is a partition
$\left \{  X,Y\right \}  $ of $\left[  n\right]  $ such that both of matrices
$A\left[  X,Y\right]  $ and $A\left[  Y,X\right]  $ have rank at most $1$. The concept of HL-bipartitions is equivalent to that of HL-clan \cite{BOCH}. In the case when $A$ is irreducible, the family of its HL-bipartitions is weakly bipartitive (see  Lemma 1 of \cite{Lw86}). 

Splits and bi-joins can be interpreted in terms of  HL-bipartitions. More precisely, we will prove in the next section that the splits (resp. the bi-joins) of an
undirected simple graph $G$ with vertex set $\left[  n\right]  $, are exactly
the HL-bipartitions of its adjacency matrix (resp. Seidel adjacency matrix).
Likewise, the bi-joins of a tournament $T$ with vertex set $\left[  n\right]
$ are the HL-bipartitions of its Seidel adjacency matrix.

 Throughout this paper, the family of HL-bipartitions  of a matrix  $A$ is denoted by $\mathcal{H}_{A}$. Our main result is the following theorem.

\begin{thm}
\label{Theoremeprincipal} If $A$ is a symmetric and irreducible $n\times n$
matrix over a field $\mathbb{K}$ then $\mathcal{H}_{A}$ is bipartitive.
Conversely, if $\mathcal{F}$ is a weakly bipartitive family of $\left[
n\right]  $ then there exists an irreducible matrix $A$ with entries in
$\{-1,0,1\}$ such that $\mathcal{F}=\mathcal{H}_{A}$. In the particular case
when $\mathcal{F}$ is bipartitive, the matrix $A$ can be chosen symmetric.
\end{thm}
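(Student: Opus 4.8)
The plan is to treat the two directions separately. For the forward implication, fix a symmetric irreducible matrix $A$; since $A$ is irreducible, Lemma~1 of \cite{Lw86} already gives that $\mathcal{H}_A$ is weakly bipartitive, so Q1 and Q2 hold. (Q1 is in any case immediate: for a single $v$ the blocks $A[\{v\},V\setminus\{v\}]$ and $A[V\setminus\{v\},\{v\}]$ are one row and one column, hence of rank at most $1$.) It therefore remains only to verify Q3, and this is where symmetry enters.

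For Q3, I take overlapping HL-bipartitions $\{X,Y\}$ and $\{X',Y'\}$ and set $A_{11}=X\cap X'$, $A_{12}=X\cap Y'$, $A_{21}=Y\cap X'$, $A_{22}=Y\cap Y'$, all nonempty by overlap. A direct check gives $X\Delta X'=A_{12}\cup A_{21}$ and $X\Delta Y'=A_{11}\cup A_{22}$, so I must show that $A[A_{12}\cup A_{21},\,A_{11}\cup A_{22}]$ has rank at most $1$ (the transposed block is then automatic from $A=A^{\mathsf{T}}$). Writing the rank-$\le 1$ factorizations $A[X,Y]=uv^{\mathsf{T}}$ and $A[X',Y']=wz^{\mathsf{T}}$, each of the four sub-blocks of the target is a sub-block of $A[X,Y]$, of $A[X',Y']$, or---using symmetry---of $A[X',Y']^{\mathsf{T}}$, hence is already rank $\le 1$. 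To glue them into a single outer product I use the two overlap relations: block $(A_{11},A_{22})$ lies in both $A[X,Y]$ and $A[X',Y']$, forcing $u_av_b=w_az_b$ on $A_{11}\times A_{22}$; and block $(A_{12},A_{21})$ lies in $A[X,Y]$ and, via symmetry, in $A[X',Y']^{\mathsf{T}}$, forcing $u_iv_{i'}=z_iw_{i'}$ on $A_{12}\times A_{21}$. These two identities make every $2\times 2$ minor of the target block vanish, so it has rank $\le 1$; the few degenerate cases (a vanishing block) are handled directly. I expect this computation, not the citation for Q2, to carry the forward direction.

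For the converse the plan is constructive, and I expect it to be the main obstacle. Starting from a weakly bipartitive $\mathcal{F}$, I would use the canonical decomposition tree of $\mathcal{F}$ (Montgolfier \cite{Mon03}), whose internal nodes are degenerate, linear/circular, or prime, and build $A$ recursively by a substitution operation: each leaf is a $1\times 1$ block, and at each internal node I substitute, into a fixed \emph{quotient} matrix realizing the local family, the blocks already built for its subtrees. The quotient matrices are the heart of the construction: a degenerate node is realized by an all-ones block (every bipartition is then an HL-bipartition, since every off-diagonal block is rank $\le 1$), a prime node of size $k$ requires an irreducible $\{-1,0,1\}$-matrix whose only HL-bipartitions are the $k$ trivial ones, and a linear/circular node a matrix realizing exactly the interval/circular bipartitions. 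Two things must then be proved: a composition lemma stating that the HL-bipartitions of the substituted matrix are exactly those read off the tree, with no spurious ones; and that the assembled $A$ is irreducible. The entry constraint $\{-1,0,1\}$ is maintained by choosing the quotient blocks accordingly.

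Finally, for the bipartitive refinement (Q3 holds) I would take every quotient block symmetric, so that substitution produces a symmetric $A$; the forward direction then confirms that $\mathcal{H}_A$ is bipartitive, consistent with $\mathcal{F}$. The principal difficulty throughout the converse is the explicit production of prime (and linear/circular) quotient matrices of every admissible size, over $\{-1,0,1\}$, irreducible, and with no HL-bipartitions beyond the required ones---especially in the symmetric case---together with the verification that substitution neither creates nor destroys HL-bipartitions.
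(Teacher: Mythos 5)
Your forward direction has a genuine gap: the two gluing identities you extract (on $A_{11}\times A_{22}$ and on $A_{12}\times A_{21}$) do \emph{not} force every $2\times 2$ minor of $A[X\Delta X',X\Delta Y']$ to vanish. They kill the minors with one row in $A_{12}$ and one in $A_{21}$, but a minor with both rows $i,i'$ in $A_{12}$ and columns $j\in A_{11}$, $j'\in A_{22}$ equals $w_j v_{j'}(z_i u_{i'}-u_i z_{i'})$, and the needed relation $z_iu_{i'}=u_iz_{i'}$ only follows from your second identity when some $k\in A_{21}$ has $v_k\neq 0$ or $w_k\neq 0$. When $v$ and $w$ both vanish on all of $A_{21}$ the conclusion can actually fail for a symmetric matrix: take $A$ on $[5]$ with $a_{12}=a_{21}=a_{34}=a_{43}=1$ and all other off-diagonal entries $0$, and $X=\{1,2,3\}$, $Y=\{4,5\}$, $X'=\{1,5\}$, $Y'=\{2,3,4\}$; these are overlapping HL-bipartitions, yet $A[\{2,3,5\},\{1,4\}]$ has rank $2$. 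So the ``degenerate cases'' are not handled ``directly'': they are precisely where irreducibility must enter (in the example $A[\{5\},[5]\setminus\{5\}]=0$), and your Q3 argument never invokes it. The computation can be repaired, but the paper sidesteps it entirely: Proposition \ref{key coeff non nul} (via the Hartfiel--Loewy inverse trick) replaces $A$ by a symmetric $i$-normalized matrix with the same HL-bipartitions for an $i\in Y\cap Y'$, and Proposition \ref{BCgeneral} then turns Q3 into the clan property \textbf{P3} of a symmetric $l2$-structure.

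For the converse you propose to rebuild $A$ from the decomposition tree of $\mathcal{F}$ by substituting quotient matrices at degenerate, linear/circular and prime nodes. Nothing is carried out: you would need Montgolfier's structure theorem for weakly bipartitive families, a composition lemma showing substitution neither creates nor destroys HL-bipartitions, a proof of irreducibility of the assembled matrix, and---hardest of all---explicit irreducible $\{-1,0,1\}$ quotient matrices of every admissible size whose only HL-bipartitions are the trivial ones (and symmetric ones in the bipartitive case). None of these is supplied, and together they amount to reproving the realization theorem for (weakly) partitive families. The paper's route is far shorter: fix $v=n$, pass to the weakly partitive family $\mathcal{P}$ on $[n-1]$ by Lemma \ref{equipartibipartiRao}, realize $\mathcal{P}$ as $Cl(g)$ for an $l2$-structure $g$ with labels in $\{-1,0,1\}$ by Theorem \ref{thmPille} ($g$ symmetric in the partitive case), and border $M(g)$ with a row and column of ones; the resulting matrix is $n$-normalized, hence irreducible, and Proposition \ref{BCgeneral} identifies $\mathcal{H}_A$ with $\mathcal{F}$. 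You should either adopt that reduction or acknowledge that your construction requires proving Theorem \ref{thmPille} from scratch.
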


%%%%%%%%%%%%%%%%%%%%%%%%%%%%%%%%%%%%%%%%%%%%%%%%%%%%%%%%%%%%%%%%%%%%%%%%%%%%%%%%%%%%
%%%%%%%%%%%%%%%%%%%%%%%%%%%%%%%%%%%    Splits, bi-joins and HL-bipartitions       %%%%%%%%%%%%%%%%%%%%%%%%%%%%%%%%%%%%%
%%%%%%%%%%%%%%%%%%%%%%%%%%%%%%%%%%%%%%%%%%%%%%%%%%%%%%%%%%%%%%%%%%%%%%%%%%%%%%%%%%%%%%

\section{Splits, bi-joins and HL-bipartitions}

Let $G$ be a graph with $n$ vertices $v_{1},...,v_{n}$. The \emph{adjacency}
\emph{matrix} of $G$ is the $n\times n$ real symmetric matrix $A(G)=[a_{ij}%
]_{1\leq i,j\leq \text{ }n}$ where $a_{ij}=1$ if $\left \{  v_{i},v_{j}\right \}
$ is an edge of $G$ and $a_{ij}=0$ otherwise. The \emph{Seidel adjacency}
\emph{matrix} of $G$ is the $n\times n$ symmetric matrix $S(G)=[s_{ij}]_{1\leq
i,j\leq \text{ }n}$ in which $s_{ij}=0$ if $i=j$ and otherwise is $-1$ if
$\left \{  v_{i},v_{j}\right \}  $ is an edge, $+1$ if it is not. The Seidel
matrix was introduced by Van Lint and Seidel \cite{Seidel}. Adjacency matrix
and Seidel matrix for a tournament are defined in the same way.

The following Proposition gives a description of splits and bi-joins in terms
of HL-bipartitions.

\begin{prop}
\label{HLand bijoinsplit} Let $G$ be a graph with vertex set $[n]$ let
$\left \{  X,Y\right \}  $ be a bipartition of $[n]$. Then

\begin{description}
\item[i)] $\{X,Y\}$ is a split of $G$ if and only if $\left \{  X,Y\right \}  $
is an HL-bipartition of $A(G)$.

\item[ii)] $\{X,Y\}$ is a bi-join of $G$ if and only if $\left \{  X,Y\right \}
$ is an HL-bipartition of $S(G)$.
\end{description}
\end{prop}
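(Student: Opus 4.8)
The plan is to reduce both statements to a single rank condition and then read off the combinatorial meaning of that condition one row at a time. First I would exploit symmetry: since $A(G)$ and $S(G)$ are symmetric, for any bipartition $\{X,Y\}$ one has $A(G)[Y,X]=(A(G)[X,Y])^{T}$, and likewise for $S(G)$. As transposition preserves rank, the two ``rank at most $1$'' requirements in the definition of an HL-bipartition collapse into one. Thus $\{X,Y\}$ is an HL-bipartition of $A(G)$ (resp. $S(G)$) if and only if the single block $A(G)[X,Y]$ (resp. $S(G)[X,Y]$) has rank at most $1$, and it suffices to analyze that one block.

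Next I would establish the two elementary rank characterizations on which everything rests. Because $X\cap Y=\emptyset$, the block $A(G)[X,Y]$ is a $\{0,1\}$-matrix whose row indexed by $x\in X$ is the indicator vector of $N_{G}(x)\cap Y$, while $S(G)[X,Y]$ is a $\{-1,+1\}$-matrix (no diagonal entries occur) whose row indexed by $x$ carries $-1$ exactly on $N_{G}(x)\cap Y$ and $+1$ elsewhere. The key observation is: a $\{0,1\}$-matrix has rank at most $1$ if and only if all its nonzero rows coincide, and a $\{-1,+1\}$-matrix has rank at most $1$ if and only if each of its rows equals $\pm r$ for one fixed row $r$. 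Both follow from the fact that rank at most $1$ forces any two rows to be scalar multiples of one another over $\mathbb{R}$; between two nonzero rows the scalar can only be $1$ in the $\{0,1\}$ case and can only be $+1$ or $-1$ in the $\{-1,+1\}$ case.

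Finally I would translate these normal forms back into neighborhoods. For (i), if $A(G)[X,Y]$ has rank at most $1$, let $Y_{1}\subseteq Y$ be the support of the common nonzero row (take $Y_{1}=\emptyset$ if the block vanishes) and set $X_{1}=\{x\in X:N_{G}(x)\cap Y=Y_{1}\}$; the nonzero rows are exactly those indexed by $X_{1}$ and the zero rows those of $X\setminus X_{1}$, which is precisely the split condition, and reading the equivalence backwards recovers rank at most $1$ from a split. For (ii), if $S(G)[X,Y]$ has rank at most $1$, let $Y_{1}$ be the set where the reference row $r$ equals $-1$ and put $X_{1}=\{x\in X:N_{G}(x)\cap Y=Y_{1}\}$; then the rows equal to $r$ are exactly those with $N_{G}(x)\cap Y=Y_{1}$ and the rows equal to $-r$ are exactly those with $N_{G}(x)\cap Y=Y\setminus Y_{1}$, which is precisely the bi-join condition, again with an identical converse.

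The only genuinely delicate point, and the one I would treat with care, is the pair of rank characterizations of the second paragraph together with their boundary cases: verifying that the proportionality argument is valid and checking the degenerate configurations ($X_{1}=\emptyset$, $Y_{1}=\emptyset$ or $Y_{1}=Y$, or a vanishing block in the split case) so that the reference row and $Y_{1}$ are well defined and the equivalences hold without exception. Everything else is a direct dictionary between rows of the block matrix and neighborhoods, and the symmetry reduction of the first paragraph removes any apparent asymmetry between the roles of $X$ and $Y$.
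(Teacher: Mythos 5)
Your proof is correct and follows essentially the same route as the paper: both arguments come down to classifying the rank-at-most-$1$ matrices with entries in $\{0,1\}$ (resp.\ $\{-1,+1\}$) and matching that normal form to the split (resp.\ bi-join) condition, with the symmetry of $A(G)$ and $S(G)$ reducing the two block conditions to one. You merely phrase the classification row-by-row (all nonzero rows equal, resp.\ all rows equal to $\pm r$) where the paper lists the normal forms up to row and column permutation, and you fill in the degenerate cases the paper leaves implicit.
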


\begin{proof}
For positive integers $r$ and $s$, we denote by $0_{r,s}$ the $r\times s$ zero
matrix and by $J_{r,s}$ the $r\times s$ matrix of ones.

\begin{description}
\item[i)] Let $\left \vert X\right \vert :=p$ and $\left \vert Y\right \vert :=q$.
It is easy to see that $\{X,Y\}$ is a split of $G$ if and only if we can
reorder rows and columns of $A(G)\left[  X,Y\right]  $ so that the resulting
matrix is $0_{p,q}$, $J_{p,q}$ or one of the following matrices%
\[%
\begin{array}
[c]{ccc}%
\left(
\begin{tabular}
[c]{c|c}%
$J_{r,s}$ & $0_{r,q-s}$\\ \hline
$0_{p-r,s}$ & $0_{p-r,q-s}$%
\end{tabular}
\  \  \right)  & \left(
\begin{tabular}
[c]{c}%
$J_{r,q}$\\ \hline
$0_{p-r,q}$%
\end{tabular}
\  \  \right)  & \left(
\begin{tabular}
[c]{c|c}%
$J_{p,s}$ & $0_{p,q-s}$%
\end{tabular}
\right)
\end{array}
\]
These are the only possible forms (up to permutation of rows and columns) of a
$p\times q$ $(0,1)$-matrices having rank at most $1$.

\item[ii)] The argument is the same as in i). It suffices to check that
$\{X,Y\}$ is a bi-join of $G$ if and only if we can reorder rows and columns of
$S(G)\left[  X,Y\right]  $ so that the resulting matrix is $J_{p,q}$,
$-J_{p,q}$ or one of the following matrices:%
\[%
\begin{array}
[c]{ccc}%
\left(
\begin{tabular}
[c]{c|c}%
$J_{r,s}$ & $-J_{r,q-s}$\\ \hline
$-J_{p-r,s}$ & $J_{p-r,q-s}$%
\end{tabular}
\right)  & \left(
\begin{tabular}
[c]{c}%
$J_{r,q}$\\ \hline
$-J_{p-r,q}$%
\end{tabular}
\  \  \  \right)  & \left(
\begin{tabular}
[c]{c|c}%
$J_{p,s}$ & $-J_{p,q-s}$%
\end{tabular}
\right)
\end{array}
\]

\end{description}
\end{proof}

The results of Cunningham and Montgolfier mentioned in the introduction can be
deduced from the first assertion of our main theorem and the previous proposition.

A similar result of Proposition \ref{HLand bijoinsplit} holds for tournaments.
More precisely, we have the following.

\begin{prop}
Let $T$ be a tournament with vertex set $[n]$ and let $\left \{  X,Y\right \}  $
be a bipartition of $[n]$. Then $\{X,Y\}$ is a bi-join of $T$ if and only if
$\left \{  X,Y\right \}  $ is an HL-bipartition of $S(T)$.
\end{prop}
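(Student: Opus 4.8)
The plan is to mirror the proof of Proposition \ref{HLand bijoinsplit} part ii), since the definition of a bi-join for tournaments is structurally the analogue of the graph bi-join after accounting for the antisymmetric nature of tournament adjacencies. First I would recall that for a tournament $T$ the Seidel adjacency matrix $S(T)=[s_{ij}]$ satisfies $s_{ii}=0$ and, for $i\neq j$, exactly one of $s_{ij},s_{ji}$ equals $-1$ and the other $+1$ (reflecting the orientation $v_i\to v_j$ versus $v_j\to v_i$). Consequently, for a bipartition $\{X,Y\}$ the two off-diagonal blocks $S(T)[X,Y]$ and $S(T)[Y,X]$ both have entries in $\{-1,+1\}$, and they are related by $S(T)[Y,X]=-\,S(T)[X,Y]^{\!\top}$. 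This last observation is the key structural fact: it means $S(T)[Y,X]$ has rank at most $1$ if and only if $S(T)[X,Y]$ does, so an HL-bipartition of $S(T)$ is determined entirely by the single block $S(T)[X,Y]$ having rank at most $1$.

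Next I would translate the bi-join condition for tournaments into a normal form for the block $S(T)[X,Y]$, exactly as in part ii) of the previous proposition. Writing $|X|=p$ and $|Y|=q$, the bi-join definition says there are $X_1\subseteq X$ and $Y_1\subseteq Y$ such that for $x\in X_1$ we have $N_T^{+}(x)\cap Y=Y_1$ and $N_T^{-}(x)\cap Y=Y\setminus Y_1$, while for $x\in X\setminus X_1$ the roles of $Y_1$ and $Y\setminus Y_1$ are swapped. In terms of $S(T)$, an out-arc $v_i\to v_j$ gives $s_{ij}=-1$ and an in-arc gives $s_{ij}=+1$. Thus for a row $x\in X_1$, the entries of $S(T)[X,Y]$ in columns $Y_1$ equal $-1$ and in columns $Y\setminus Y_1$ equal $+1$; for a row $x\in X\setminus X_1$ these are reversed. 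I would then show that, after reordering rows by the partition $X=X_1\cup(X\setminus X_1)$ and columns by $Y=Y_1\cup(Y\setminus Y_1)$, the block $S(T)[X,Y]$ takes one of the same $(\pm 1)$-normal forms listed in part ii) (allowing the degenerate cases where one of the four sets is empty, which collapse the block to $J_{p,q}$, $-J_{p,q}$, or the two single-split forms). These are precisely the $p\times q$ matrices with entries in $\{-1,+1\}$ of rank at most $1$, since any rank-one $(\pm1)$-matrix is an outer product of two $\pm1$ vectors, which partitions rows and columns into at most two classes each.

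For the converse direction I would argue that if $S(T)[X,Y]$ has rank at most $1$ with $(\pm1)$-entries, then its row space is spanned by a single $\pm1$ vector, which groups the columns into $Y_1$ (where the canonical row is $-1$) and $Y\setminus Y_1$ (where it is $+1$), and correspondingly each row is either equal to this canonical sign pattern or its negation, grouping the rows into $X_1$ and $X\setminus X_1$. Reading these groupings back through the dictionary between $\pm1$ entries and arc directions recovers exactly the bi-join condition. Combining both directions with the reduction from the first paragraph, that rank$(S(T)[Y,X])\le 1$ is automatic once rank$(S(T)[X,Y])\le 1$, completes the equivalence. The main obstacle, and the only genuinely new point compared with the graph case, is carefully handling the antisymmetry relation $S(T)[Y,X]=-S(T)[X,Y]^{\!\top}$ so that the single rank condition on one block suffices; everything else is a direct transcription of the matrix normal forms already established in the proof of Proposition \ref{HLand bijoinsplit}.
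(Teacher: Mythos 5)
Your proof is correct and follows essentially the route the paper intends: the paper states this proposition without proof, presenting it as the direct analogue of Proposition \ref{HLand bijoinsplit} ii), and your argument is exactly that transcription via the $(\pm 1)$ rank-one normal forms of the block $S(T)[X,Y]$. The one genuinely new ingredient you supply, the skew-symmetry relation $S(T)[Y,X]=-S(T)[X,Y]^{\top}$ reducing the two rank conditions to one, is precisely the detail the paper's ``similar result'' remark glosses over, and you handle it correctly.
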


\section{Clans of $l2$-structures and their relationship with HL-bipartitions}

Let $V$ be a nonempty set and let $\widehat{V}^{2}:=\left \{  \left(
x,y\right)  /x\neq y\in V\right \}  $. Following \cite{ehr roz livre} a
\emph{labelled} $2$-\emph{structure }on $V$, or a $l2$-\emph{structure}, for
short, is a function $g$ from $\widehat{V}^{2}$ to a set\ of \emph{labels}
$\mathcal{C}$. With each subset  $X$ of $V$ associate the
$l2$-\emph{substructure} $g[X]$ of $g$ \emph{induced} by $X$ defined on $X$ by
$g[X](x,y):=g(x,y)$ for any $x\neq y\in X$. A  $l2$-structure $g$ on a set
$V$ is\emph{ symmetric }if $g(x,y)=g(y,x)$ for for every $x\neq y\in V$.

Let $g$ be a $l2$-structure on $\left[  n\right]  $ whose set of labels is a
field $\mathbb{K}$. We associate to $g$ the $n\times n$ matrix $M(g)=[m_{ij}%
]_{1\leq i,j\leq \text{ }n}$ in which $m_{ij}=0$ if $i=j$ and $m_{ij}=g\left(
v_{i},v_{j}\right)  $ otherwise. Conversely, let $A=[a_{ij}]_{1\leq
i,j\leq \text{ }n}$ be a matrix with entries in a field $\mathbb{K}$. We
associated to $A$ the $l2$-structure $g_{A}$ on $\left[  n\right]  $ and set
of labels $\mathbb{K}$ such that $g_{A}(i,j)=a_{ij}$ for $i\neq j$ $\in$
$\left[  n\right]  $.

Given a $l2$-structure $g$ on $V$, a subset $X$ of $V$ is a \emph{clan}
(\cite{ehr roz livre}, Subsection 3.2) of $g$ if for any $a,b\in$ $X$ and
$x\in B\setminus X$, we have $g(a,x)=g(b,x)$ and $g(x,a)=g(x,b)$.

\begin{rem}
\label{rema}
$\left.  {}\right.  $
\begin{description}
\item[i)] Graphs and tournaments can be seen as special classes $l2$%
-structure. Moreover, the notion of clan generalizes that of module.

\item[ii)] let $A$ be a matrix. if $I$ is a proper clan of $g_{A}$ then
$\left \{  I,\left[  n\right]  \setminus I\right \}  $ is an HL-bipartition of
$A$.
\end{description}
\end{rem}

The following Proposition appears in another form in \cite{BOCH} (see Lemma 2.2). It
describes the HL-bipartitions of a particular type of matrices called
\emph{normalized} matrices. Let $A=[a_{ij}]_{1\leq i,j\leq \text{ }n}$ be a
matrix and let $v\in \left[  n\right]  $. We say that $A$ is $v$%
-\emph{normalized }if $a_{vj}=a_{jv}=1$ for every $j\in \left[  n\right]
\setminus \left \{  v\right \}  $.

\begin{prop}
\label{BCgeneral} Let $A=[a_{ij}]_{1\leq i,j\leq \text{ }n}$ be a
$v$-normalized matrix for some $v\in \left[  n\right]  $ and let $I\subseteq
\left[  n\right]  \setminus \left \{  h\right \}  $. Then $\left \{  I,\left[
n\right]  \setminus I\right \}  $ is an HL-bipartition of $A$ if and only if
$I$ is a clan of $g_{A}\left[  \left[  n\right]  \setminus \left \{  v\right \}
\right]  $.
\end{prop}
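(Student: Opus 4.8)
The plan is to prove both directions of the equivalence by unwinding the definitions of HL-bipartition and clan, exploiting the $v$-normalization to control the off-diagonal structure. Let me set $W := [n]\setminus\{v\}$, so that the hypothesis reads $a_{vj}=a_{jv}=1$ for all $j\in W$, and $I\subseteq W$. The key observation I would establish first is a normal form for the two submatrices $A[I,[n]\setminus I]$ and $A[[n]\setminus I,I]$ governing the HL-bipartition condition. Since $v\in[n]\setminus I$, the column of $A[I,[n]\setminus I]$ indexed by $v$ is the all-ones vector $\mathbf{1}$ (because $a_{jv}=1$ for every $j\in I\subseteq W$), and similarly the row of $A[[n]\setminus I,I]$ indexed by $v$ is the all-ones row. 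This is the crucial leverage: an all-ones column or row that is forced to be present.

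First I would prove the forward direction. Suppose $\{I,[n]\setminus I\}$ is an HL-bipartition, so $A[I,[n]\setminus I]$ has rank at most $1$. Because this matrix contains a nonzero column (the all-ones column indexed by $v$), rank at most $1$ forces every other column to be a scalar multiple of $\mathbf{1}$, i.e.\ constant down each column. Concretely, for each fixed $x\in W\setminus I=([n]\setminus I)\cap W$, the entries $a_{ix}$ for $i\in I$ are all equal; call this common value $\lambda_x$. In clan language this says exactly $g_A(a,x)=g_A(b,x)$ for all $a,b\in I$ and $x\in W\setminus I$. The symmetric argument applied to $A[[n]\setminus I,I]$ (whose row indexed by $v$ is all ones) gives $g_A(x,a)=g_A(x,b)$ for the same range. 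Since a clan of $g_A[W]$ only imposes conditions for $x\in W\setminus I$, these two equalities are precisely the defining conditions, so $I$ is a clan of $g_A[W]$.

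For the converse, I would run the implication backwards: if $I$ is a clan of $g_A[W]$, then for every $x\in W\setminus I$ the column $(a_{ix})_{i\in I}$ is constant and the row $(a_{xi})_{i\in I}$ is constant. Adjoining the $v$-indexed column, which is constantly $1$, I conclude that \emph{every} column of $A[I,[n]\setminus I]$ is a constant vector, hence a scalar multiple of $\mathbf{1}$; therefore the matrix has rank at most $1$. The same reasoning shows $A[[n]\setminus I,I]$ has rank at most $1$, giving the HL-bipartition. Note I should handle the degenerate case $I=\{v\}^c$ or the trivial positioning of $v$ carefully, but since $I\subseteq W$ we always have $v\in[n]\setminus I$, so the all-ones column/row is genuinely available and no separate case arises.

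The main obstacle, though it is conceptually mild rather than technical, is the rigorous equivalence ``rank at most $1$ and containing the nonzero vector $\mathbf{1}$ as one column $\iff$ every column is a scalar multiple of $\mathbf{1}$, i.e.\ constant.'' This is where the $v$-normalization does all the work: without the guaranteed nonzero all-ones column, rank at most $1$ would allow the matrix to be identically zero while still failing to be constant in a useful way, or would permit columns proportional to some \emph{other} fixed vector rather than to $\mathbf{1}$. I would state this as a short linear-algebra lemma (a rank-one matrix with a specified nonzero column has all columns proportional to that column) and then translate the proportionality-to-$\mathbf{1}$ into the constancy condition that matches the clan definition. Once that bridge is made explicit, both directions are immediate, and the proof is essentially a two-line application in each direction.
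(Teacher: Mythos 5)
Your proof is correct and takes essentially the same route as the paper's: both directions hinge on the all-ones row/column supplied by the $v$-normalization, which converts the rank-at-most-one condition on $A[I,[n]\setminus I]$ and $A[[n]\setminus I,I]$ into constancy of the remaining columns/rows, i.e.\ the clan condition. The paper merely packages the forward direction as vanishing $2\times 2$ minors $\det(A[\{v,k\},\{i,j\}])$ and routes the converse through the remark that a clan of $g_{A}[[n]\setminus\{v\}]$ is a clan of $g_{A}$; these are cosmetic differences from your column-proportionality argument.
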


\begin{proof}
In order to prove the necessary condition, let $i,j\in I$ and $k\in(\left[
n\right]  \setminus \left \{  v\right \}  )\setminus I$. Since $\left \{
I,\left[  n\right]  \setminus I\right \}  $ is an HL-bipartition of $A$, both
of matrices $A\left[  \left[  n\right]  \setminus I,I\right]  $ and $A\left[
I,\left[  n\right]  \setminus I\right]  $ have rank at most $1$. It follows
that $\det(A[\{v,k\},\{i,j\}])=\det(A[\{i,j\},\{v,k\}])=0$ and so
$g(k,i)=a_{ki}=a_{kj}=g(k,j)$ and $g(i,k)=a_{ik}=a_{jk}=g(j,k)$. We conclude
that $I$ is clan of $g_{A}(\left[  n\right]  \setminus \left \{  h\right \}  )$.
Conversely, let $I$ be a clan of $g_{A}\left[  \left[  n\right]
\setminus \left \{  v\right \}  \right]  $. Since $A$ is $v$-normalized, $I$ is a
clan of $g_{A}$ and then, by Remark \ref{rema}, $\left \{  I,\left[  n\right]
\setminus I\right \}  $ is an HL-bipartition of $A$.
\end{proof}

Let $V$ be a nonempty set $V$ and let $g$ be a $l2$-structure on $V$. We denote by $Cl(g)$ the family of
nonempty clans of $g$. This family satisfies the following well-known
properties (see, for example, Subsection 3.3 of \cite{ehr roz livre}).

\begin{description}
\item[P1)] $V\in P$, $\emptyset \notin Cl(g)$ and for all $v\in V$ , $\{v\} \in
Cl(g)$;

\item[P2)] Given $X,Y\in Cl(g)$; if $X$ and $Y$ overlap, that is $X\cap Y,$
$X\setminus Y$ and $Y\setminus X$ are all nonempty, then $X\cap Y\in Cl(g)$,
$X\setminus Y\in Cl(g)$, $Y\setminus X\in Cl(g)$ and $X\cup Y\in Cl(g)$.
\end{description}

Moreover, if $g$ is symmetric then $Cl(g)$ satisfies the additional property:

\begin{description}
\item[P3)] Given $X,Y\in Cl(g)$; if $X$ and $Y$ overlap then $X\bigtriangleup
Y=(X\setminus Y)\cup(Y\setminus X)\in Cl(g)$.
\end{description}

Let  $\mathcal{P}$ be a family of subsets of $V$.
We say that $\mathcal{P}$ is \emph{weakly partitive }if \textbf{P1} and
\textbf{P2} hold. If also \textbf{P3} holds, we say that $\mathcal{P}$ is
\emph{partitive. }Partitive and weakly partitive families were introduced in
\cite{CHM81}. They are closely related to partitive families as shown in the
following lemma.

\begin{lem}
\label{equipartibipartiRao} Let $\mathcal{B}$ be
a family of bipartitions of $V$ and let $v\in V$. We denote by $\mathcal{P}$ the family of
subsets $X$ of $V\setminus \left \{  v\right \}  $ such that $\{X,V\setminus X\}
\in \mathcal{B}$. Then $\mathcal{B}$ is weakly bipartitive (resp.bipartitive)
if and only if $\mathcal{P}$ is weakly partitive (resp. partitive).
\end{lem}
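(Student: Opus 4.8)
The plan is to establish the equivalence by constructing an explicit bijection between the bipartitions in $\mathcal{B}$ and the subsets in $\mathcal{P}$, and then checking that each axiom (Q1, Q2, Q3) for $\mathcal{B}$ translates precisely into the corresponding axiom (P1, P2, P3) for $\mathcal{P}$. The key observation is that for a family of bipartitions of $V$, fixing a distinguished element $v$ lets us identify each bipartition $\{X,Y\}$ with the unique block among $X,Y$ that does \emph{not} contain $v$. Concretely, every bipartition $\{X,Y\}\in\mathcal{B}$ is recorded in $\mathcal{P}$ by the block avoiding $v$, and conversely each $X\in\mathcal{P}$ gives back the bipartition $\{X,V\setminus X\}$. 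This map is a bijection since a bipartition has exactly one block missing $v$.

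First I would verify the easy axiom correspondence: Q1 says $\{\{w\},V\setminus\{w\}\}\in\mathcal{B}$ for all $w\in V$. For $w\neq v$ this records the singleton $\{w\}$ in $\mathcal{P}$, and for $w=v$ it records $V\setminus\{v\}$; together with the fact that the empty set never arises (both blocks of a bipartition are nonempty), this is exactly P1, namely that $V\setminus\{v\}\in\mathcal{P}$, that $\emptyset\notin\mathcal{P}$, and that every singleton of $V\setminus\{v\}$ lies in $\mathcal{P}$. Here one must be slightly careful about the ambient set for $\mathcal{P}$: the family lives in subsets of $V\setminus\{v\}$, so the role of the ``full set'' in P1 is played by $V\setminus\{v\}$, which corresponds under the bijection to the bipartition $\{\{v\},V\setminus\{v\}\}$.

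The central step, and the one I expect to be the main obstacle, is matching Q2 with P2. The difficulty is bookkeeping: one must track which of the four pieces produced by overlapping bipartitions contains $v$ and which does not. Suppose $\{X,Y\}$ and $\{X',Y'\}$ overlap in $\mathcal{B}$, and normalize so that $v\in Y$ and $v\in Y'$, so the corresponding sets in $\mathcal{P}$ are $X$ and $X'$. I would first check that overlap of the bipartitions $\{X,Y\}$ and $\{X',Y'\}$ is equivalent to overlap of the sets $X$ and $X'$ (meaning $X\cap X'$, $X\setminus X'$, $Y\cap X'=X'\setminus X$ are nonempty, together with $X\cap Y'=X\setminus X'$ nonempty). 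Then the four bipartitions guaranteed by Q2, when intersected appropriately and reduced to their $v$-free blocks, yield exactly $X\cap X'$, $X\setminus X'=X\cap Y'$, $X'\setminus X=Y\cap X'$, and $X\cup X'$ (the last arising as the $v$-free block of $\{Y\cap Y',X\cup X'\}$ since $v\in Y\cap Y'$). These are precisely the four sets demanded by P2, so Q2 for $\mathcal{B}$ is equivalent to P2 for $\mathcal{P}$.

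Finally I would handle Q3 versus P3 by the same device. With $v\in Y\cap Y'$ as above, the symmetric difference bipartition $\{X\Delta X', X\Delta Y'\}$ has $v$ in the block $X\Delta Y'$ (since $v\notin X$ but $v\in Y'$, so $v\in X\Delta Y'$), whence its recorded set in $\mathcal{P}$ is $X\Delta X'$. Thus Q3 asserts exactly that $X\Delta X'\in\mathcal{P}$, which is P3. Since the bijection is an equivalence that carries each axiom to its partner in both directions, $\mathcal{B}$ is weakly bipartitive iff $\mathcal{P}$ is weakly partitive, and adding Q3$\Leftrightarrow$P3 gives the bipartitive versus partitive equivalence. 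The only real care needed throughout is the consistent choice of representatives relative to $v$ and the verification that overlap conditions transfer, which is a finite case analysis rather than a deep argument.
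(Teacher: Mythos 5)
Your proof is correct. The paper in fact gives no proof of this lemma at all (it is stated as a routine consequence of the correspondence $\{X,Y\}\mapsto$ ``the block avoiding $v$''), so there is nothing to compare against; your argument is exactly the natural one that the authors leave implicit. You correctly handle the two points that actually require care: that $V\setminus\{v\}$ plays the role of the full set in \textbf{P1} (arising from the bipartition $\{\{v\},V\setminus\{v\}\}$), and that after normalizing $v\in Y\cap Y'$ the overlap conditions and the four sets of \textbf{P2} (resp.\ the symmetric difference of \textbf{P3}) correspond exactly to the $v$-free blocks of the bipartitions produced by \textbf{Q2} (resp.\ \textbf{Q3}), using $Y\cup Y'=V\setminus(X\cap X')$ and $X\Delta Y'=V\setminus(X\Delta X')$.
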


The next Theorem of gives relationship between weakly partitive family and
clans family.

\begin{thm}
 \label{thmPille}Let $\mathcal{P}$ be a weakly partitive
family on  $V$, then there exists an $l2$-structure $g$ on $V$ with
labels in a set of size at most $3$ such that $\mathcal{P=}Cl(g)$. Moreover if
$\mathcal{P}$ is partitive family on a set $V$, then $g$ can be chosen symmetric.
\end{thm}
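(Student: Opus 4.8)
The plan is to construct the $l2$-structure $g$ explicitly from a weakly partitive family $\mathcal{P}$ by encoding, for each pair of distinct elements $x,y\in V$, information about the ``strength'' or ``height'' at which $x$ and $y$ are first separated in the family $\mathcal{P}$. The natural tool here is the decomposition tree associated to a weakly partitive family: by the structure theorem for (weakly) partitive families from \cite{CHM81}, $\mathcal{P}$ admits a canonical tree representation whose internal nodes are of three types --- \emph{complete} (all subsets of children are strong clans, i.e.\ the module-like behaviour), \emph{linear} (only intervals in a fixed ordering of the children are clans), and \emph{prime} (only the trivial unions are clans). The strong members of $\mathcal{P}$ (those overlapping no other member) are exactly the nodes of this tree, and every member of $\mathcal{P}$ is a union of children of a complete node or a consecutive interval of children of a linear node.

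First I would recall or reprove this tree decomposition, since it furnishes the combinatorial skeleton on which to define $g$. Then, for each internal node $N$ of the tree, I would assign labels to the pairs $(x,y)$ whose least common ancestor is $N$, using the node type: a complete node gets a single ``neutral'' label (say $1$) so that arbitrary unions of children remain clans, mimicking the behaviour of \textbf{P2} with no ordering constraint; a linear node gets two labels (say encoding ``$x$ before $y$'' versus ``$y$ before $x$'' in the fixed child-ordering), so that precisely the intervals become clans; and a prime node must be handled so that no nontrivial union of children is a clan. The goal is to verify that $Cl(g)$, the family of nonempty clans of the resulting $g$, is exactly $\mathcal{P}$, and to check that only three labels suffice. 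For the symmetric/partitive case, I would observe that property \textbf{P3} (closure under symmetric difference) forces the linear nodes to behave symmetrically, so the two directional labels can be collapsed and $g$ can be taken symmetric.

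The key steps, in order, are: (1) establish the decomposition tree of $\mathcal{P}$ with its complete/linear/prime node typing; (2) define $g$ node-by-node via the least-common-ancestor labelling, keeping the label alphabet to size at most $3$; (3) prove $Cl(g)\subseteq\mathcal{P}$ by showing any clan of $g$ must respect the tree structure, hence be a node-union of the allowed type; (4) prove $\mathcal{P}\subseteq Cl(g)$ by checking each tree-member is genuinely a clan of $g$; and (5) in the partitive case, verify the symmetric refinement.

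The hard part, I expect, will be the prime nodes. At a prime node the only clans among unions of its children are the trivial ones (a single child or all of them), and I must choose labels on the pairs crossing between distinct children so that \emph{no} proper nontrivial union forms a clan, yet still fit within the global three-label budget shared with the complete and linear nodes elsewhere in the tree. This is delicate because forbidding all intermediate unions is a strong constraint: it essentially requires that the restriction of $g$ to the quotient at a prime node be a rigid (clan-free except trivially) $l2$-structure on the set of children, and I must exhibit such a structure on any number of children using only the available labels. Verifying that this local rigidity does not accidentally create or destroy clans at other levels of the tree --- i.e.\ that the labelling is globally consistent --- is the main obstacle, and will likely require an inductive argument on the height of the tree together with the clan-locality fact that membership in a clan is detected pairwise by equality of $g$-values across the boundary.
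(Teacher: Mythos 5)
Note first that the paper does not actually prove Theorem~\ref{thmPille}: it imports it from Ehrenfeucht--Harju--Rozenberg (\cite{ehr roz livre}, Theorem 5.7) and Ille--Woodrow \cite{Ille09}, with the symmetric refinement attributed to Ille \cite{Ille2016}. Your overall strategy --- build the canonical decomposition tree of $\mathcal{P}$ with complete/linear/prime node types and define $g(x,y)$ from the type of the least common ancestor of $x$ and $y$ --- is the same strategy as in those references, so the route is sound in outline. But as written there is a genuine gap, and it is not only the prime nodes you flag. Your labelling scheme assigns one global ``neutral'' label to every complete node and one fixed ordered pair of labels to every linear node. The canonical tree of a (weakly) partitive family can contain a complete node whose child is again complete: take $V=\{1,2,3\}$ and $\mathcal{P}=\{\{1\},\{2\},\{3\},\{1,2\},V\}$, which is partitive because no two members overlap; every member is strong, the root has children $\{1,2\}$ and $\{3\}$, the node $\{1,2\}$ has children $\{1\}$ and $\{2\}$, and both internal nodes are complete (all unions of their children lie in $\mathcal{P}$). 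With a single neutral label your $g$ is constant, so every nonempty subset of $V$ is a clan of $g$, yet $\{1,3\}\notin\mathcal{P}$; hence $Cl(g)\neq\mathcal{P}$. The same phenomenon occurs for nested linear nodes, whose intervals would merge across levels. Any correct construction must force the label seen across a node to differ from the labels used inside its children at the boundary (for instance by alternating labels with the depth of the node), and doing this for all three node types simultaneously within an alphabet of size $3$ is essentially the entire content of the theorem; your steps (3) and (4) presuppose that clans of $g$ ``respect the tree,'' which is exactly what fails in the example above.

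The second gap is the one you name yourself: at a prime node with $k$ children you must exhibit an $l2$-structure on $k$ points whose only clans are the singletons and the whole set, symmetric in the partitive case, drawn from the same $3$-letter alphabet, and compatible with the cross-level constraint above --- but you never construct it. (It can be done: for $k\ge 4$ a primitive graph such as the path $P_k$ uses two symmetric labels; for $k=3$ no symmetric $2$-labelling is primitive, so one needs either a cyclic tournament, which breaks symmetry, or three pairwise distinct symmetric labels, which is where the bound $3$ is tight.) As it stands the proposal is a statement of what must be proved rather than a proof; since this paper treats the theorem as a quoted result, the honest repair is either to carry out the full construction of \cite{Ille09} or simply to cite it, as the authors do.
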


The first part of this theorem was proved by Ehrenfeucht, Harju, and Rozenberg
(see \cite{ehr roz livre}, Theorem 5.7), and later by Ille and Woodrow
\cite{Ille09}. As noted by Ille \cite{Ille2016}, the method given in
\cite{Ille09} can also be used to prove the second part.

\section{Proof of main theorem}

We start with the following result.

\begin{prop}

 \label{key coeff non nul}Let $A=[a_{ij}]_{1\leq i,j\leq \text{ }n}$ be an
irreducible $n\times n$ matrix with entries in a field $\mathbb{K}$. Then for
every $v\in \left[  n\right]  $ there is a $v$-normalized matrix $\widehat{A}$
with non zero entries in a field $\widehat{\mathbb{K}}$ containing
$\mathbb{K}$ such that $A$ and $\widehat{A}$ have the same HL-bipartitions.
Moreover, if $A$ is symmetric then $\widehat{A}$ can be chosen symmetric
\end{prop}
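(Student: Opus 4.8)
The plan is to transfer the problem from bipartitions of $[n]$ to subsets of $[n]\setminus\{v\}$, realize the resulting family by an $l2$-structure with few labels via Theorem \ref{thmPille}, and then turn those labels into distinct nonzero elements of a field extension so as to read off a $v$-normalized matrix. First I would record the one structural remark that the construction rests on: since $X\cap Y=\emptyset$, only off-diagonal entries occur in $A[X,Y]$ and $A[Y,X]$, so the diagonal of a matrix is irrelevant to its HL-bipartitions and may be set freely. Fix $v\in[n]$. As $A$ is irreducible, $\mathcal{H}_A$ is weakly bipartitive by Lemma~1 of \cite{Lw86}. Every bipartition of $[n]$ is $\{I,[n]\setminus I\}$ for a unique $I\subseteq[n]\setminus\{v\}$, so no information is lost in setting $\mathcal{P}:=\{I\subseteq[n]\setminus\{v\}:\{I,[n]\setminus I\}\in\mathcal{H}_A\}$, and Lemma \ref{equipartibipartiRao} shows that $\mathcal{P}$ is weakly partitive on $[n]\setminus\{v\}$.

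Next I would apply Theorem \ref{thmPille} to obtain an $l2$-structure $g$ on $[n]\setminus\{v\}$ with at most three labels and $Cl(g)=\mathcal{P}$. The clan condition depends only on which pairs carry equal labels, so any injective relabeling preserves $Cl(g)$; I would therefore send the labels to three distinct nonzero elements of $\widehat{\mathbb{K}}:=\mathbb{K}(t)$, say $1,\,t,\,1+t$. This produces an $(n-1)\times(n-1)$ matrix $B$ over $\widehat{\mathbb{K}}$ with all off-diagonal entries nonzero and $Cl(g_B)=\mathcal{P}$. I then define $\widehat{A}$ on $[n]$ by placing $B$ on $[n]\setminus\{v\}$ and setting every diagonal entry, and every entry of row $v$ and of column $v$, equal to $1$. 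By construction $\widehat{A}$ is $v$-normalized, all its entries are nonzero, and $g_{\widehat{A}}[[n]\setminus\{v\}]=g_B$. Proposition \ref{BCgeneral} then gives, for $I\subseteq[n]\setminus\{v\}$, that $\{I,[n]\setminus I\}\in\mathcal{H}_{\widehat{A}}$ iff $I\in Cl(g_B)=\mathcal{P}$ iff $\{I,[n]\setminus I\}\in\mathcal{H}_A$, whence $\mathcal{H}_{\widehat{A}}=\mathcal{H}_A$. This settles the first assertion (and, since $\mathbb{K}(t)$ is always large enough, one may take $\widehat{\mathbb{K}}=\mathbb{K}$ when $\mathbb{K}$ already has at least three nonzero elements).

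For the symmetric refinement the same construction works verbatim once $g$, and hence $B$ and $\widehat{A}$, can be taken symmetric; by the second clause of Theorem \ref{thmPille} this is possible precisely when $\mathcal{P}$ is partitive, that is, when $\mathcal{H}_A$ satisfies \textbf{Q3}. Establishing this is the main obstacle, and the subtle point is that one may \emph{not} invoke the first assertion of Theorem \ref{Theoremeprincipal}, since that assertion is itself deduced from the present proposition; so a direct argument is required. I would prove \textbf{Q3} for symmetric $A$ by hand: given overlapping HL-bipartitions $\{X,Y\}$ and $\{X',Y'\}$, write $a=X\cap X'$, $b=X\cap Y'$, $c=Y\cap X'$, $d=Y\cap Y'$, note that $A[Y,X]=A[X,Y]^{\top}$ reduces each HL-condition to a single rank-at-most-one requirement on a two-by-two block decomposition, and then deduce that the block $A[\,b\cup c,\ a\cup d\,]$ attached to $\{X\Delta X',X\Delta Y'\}$ also has rank at most $1$. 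I expect this rank bookkeeping, rather than the clan machinery, to be the genuinely delicate step, with the key being to exploit symmetry so that the row- and column-space constraints coming from the two given bipartitions force the required degeneracy of the new block.
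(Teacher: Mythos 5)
Your proof of the first assertion is correct but follows a genuinely different route from the paper's. You realize the weakly partitive family $\mathcal{P}$ by a $3$-labelled $l2$-structure via Theorem \ref{thmPille}, relabel injectively into $\mathbb{K}(t)^{*}$, and border by a normalized row and column; the paper instead sets $\chi=\mathrm{diag}(x_{1},\dots,x_{n})$, invokes the Hartfiel--Loewy facts (Lemma \ref{irreandinvers}) that $(A+\chi)^{-1}$ exists, has all entries nonzero, and has the same HL-bipartitions as $A$, and takes $\widehat{A}=D(A+\chi)^{-1}D^{\prime}$ for suitable invertible diagonal $D,D^{\prime}$ (Lemma \ref{memHL-clan}). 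Your route yields entries in $\{1,t,1+t\}$ rather than in a rational function field in $n$ variables, and is essentially the construction the paper reserves for the converse half of Theorem \ref{Theoremeprincipal}.

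The symmetric refinement, however, has a genuine gap. To take $g$ symmetric in Theorem \ref{thmPille} you need $\mathcal{P}$ partitive, i.e.\ $\mathcal{H}_{A}$ to satisfy \textbf{Q3}; you rightly note that quoting the first assertion of Theorem \ref{Theoremeprincipal} would be circular, and you announce a direct rank argument --- but you never carry it out, and the step you defer is exactly the hard one. Moreover, the purely local bookkeeping you describe cannot succeed: symmetry of $A$ together with the two rank conditions on $A[X,Y]$ and $A[X^{\prime},Y^{\prime}]$ does \emph{not} imply that $A[X\Delta X^{\prime},X\Delta Y^{\prime}]$ has rank at most $1$; irreducibility must enter globally. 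For instance, take $n=5$, $X=\{1,2\}$, $Y=\{3,4,5\}$, $X^{\prime}=\{1,3\}$, $Y^{\prime}=\{2,4,5\}$, and the symmetric matrix with $a_{12}=a_{13}=a_{14}=a_{15}=0$, $a_{24}=a_{35}=1$, $a_{25}=a_{34}=0$, the remaining off-diagonal entries arbitrary. Then $\{X,Y\}$ and $\{X^{\prime},Y^{\prime}\}$ are overlapping HL-bipartitions, yet $A[\{2,3\},\{1,4,5\}]$ has rows $(0,1,0)$ and $(0,0,1)$, hence rank $2$. This matrix is reducible (row $1$ vanishes off the diagonal), so it does not contradict the theorem, but it shows that any direct proof of \textbf{Q3} must exploit irreducibility in a non-local way, which your sketch does not do. The paper sidesteps the issue entirely: when $A$ is symmetric, $A+\chi$ and $(A+\chi)^{-1}$ are symmetric, so $D=D^{\prime}$ and $\widehat{A}=D(A+\chi)^{-1}D$ is symmetric for free; \textbf{Q3} is then \emph{deduced} from the present proposition together with Proposition \ref{BCgeneral} and \textbf{P3}, not used to prove it. You need either to adopt that construction for the symmetric case or to supply the missing direct proof of \textbf{Q3}.
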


For the proof of this proposition, we use the following lemma.

\begin{lem}
\label{irreandinvers} Let $A=[a_{ij}]_{1\leq i,j\leq \text{ }n}$ be a
irreducible matrix. Let $x_{1},x_{2},\ldots,x_{n}$ be (independent)
indeterminates, $\chi=diag(x_{1},x_{2},\ldots,x_{n})$. Then we have the
following statements:

\begin{description}
\item[i)] the matrix $A+\chi$ is invertible in $\mathbb{K}(x_{1},x_{2}%
,\ldots,x_{n})$.

\item[ii)] all entries of $(A+\chi)^{-1}$ are nonzero.

\item[iii)] $A$, $A+\chi$ and $(A+\chi)^{-1}$ have the same HL-bipartitions.
\end{description}
\end{lem}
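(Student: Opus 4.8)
The plan is to prove the three statements of Lemma~\ref{irreandinvers} in the order given, since each builds on the previous one. For part~\textbf{i)}, I would compute $\det(A+\chi)$ as a polynomial in the indeterminates $x_{1},\ldots,x_{n}$. Expanding the determinant, the monomial $x_{1}x_{2}\cdots x_{n}$ arises exactly once (from the product of the diagonal entries $a_{ii}+x_{i}$, picking the $x_{i}$ from each), so its coefficient is $1$. Hence $\det(A+\chi)$ is a nonzero polynomial in $\mathbb{K}[x_{1},\ldots,x_{n}]$, and therefore $A+\chi$ is invertible over the field of rational functions $\mathbb{K}(x_{1},\ldots,x_{n})$.

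For part~\textbf{ii)}, recall that the $(i,j)$ entry of $(A+\chi)^{-1}$ equals $(-1)^{i+j}\det\!\big((A+\chi)[\,[n]\setminus\{j\},[n]\setminus\{i\}\,]\big)/\det(A+\chi)$. So I must show that each cofactor, i.e.\ each such minor, is a nonzero polynomial. For a diagonal entry ($i=j$) the minor is $\det\!\big((A+\chi)[\,[n]\setminus\{i\}\,]\big)$, which contains the monomial $\prod_{k\neq i}x_{k}$ with coefficient $1$ by the same argument as in~\textbf{i)}, hence is nonzero. For an off-diagonal entry ($i\neq j$) the relevant submatrix is obtained by deleting row $j$ and column $i$; here the product-of-diagonal trick no longer applies directly, and this is where irreducibility must enter. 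The idea I would pursue is to exhibit a single nonzero monomial in the minor by a careful choice of permutation/term in the Leibniz expansion: after deleting row $j$ and column $i$, I would pick up the factor $a_{ij}$ (or more generally route through entries forced to be nonzero) together with a product of $x_{k}$'s over the remaining indices, using irreducibility to guarantee that some off-diagonal entry needed to complete a nonvanishing term is itself nonzero. I expect this off-diagonal case to be the \emph{main obstacle}: one has to argue that irreducibility (no proper subset $X$ with $A[X,[n]\setminus X]=0$) prevents the entire minor from collapsing to the zero polynomial, presumably by choosing a permutation whose support uses a nonzero matrix entry to bridge indices $i$ and $j$ while filling the rest of the diagonal with indeterminates.

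For part~\textbf{iii)}, I would proceed in two steps. First, $A$ and $A+\chi$ have the same HL-bipartitions because HL-bipartitions of a matrix $B$ are determined solely by the off-diagonal blocks $B[X,Y]$ and $B[Y,X]$ (the condition $\operatorname{rank}\le 1$ on these blocks), and adding the diagonal matrix $\chi$ alters only diagonal entries, leaving every off-diagonal block untouched. Second, to see that $A+\chi$ and its inverse $(A+\chi)^{-1}$ share the same HL-bipartitions, I would use the standard fact relating the ranks of complementary off-diagonal blocks of a matrix and of its inverse. Concretely, for an invertible matrix $B$ and a bipartition $\{X,Y\}$, the Schur-complement / rank identity gives $\operatorname{rank}\big(B^{-1}[X,Y]\big)=\operatorname{rank}\big(B[Y,X]\big)$ (and symmetrically with $X,Y$ swapped), so the rank-at-most-one condition on one matrix's off-diagonal blocks is equivalent to the same condition on the inverse's. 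Thus $\{X,Y\}\in\mathcal{H}_{B}$ iff $\{X,Y\}\in\mathcal{H}_{B^{-1}}$.

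Combining the three parts, $A$, $A+\chi$, and $(A+\chi)^{-1}$ all have identical HL-bipartition families, with $(A+\chi)^{-1}$ having all entries nonzero by~\textbf{ii)} — which is exactly what Proposition~\ref{key coeff non nul} will need, taking $\widehat{\mathbb{K}}=\mathbb{K}(x_{1},\ldots,x_{n})$ and rescaling the inverse to make it $v$-normalized. I would remark that when $A$ is symmetric, $A+\chi$ and hence $(A+\chi)^{-1}$ are symmetric as well, which supports the symmetric case of the proposition; the rank identity in~\textbf{iii)} respects this symmetry since $\operatorname{rank}(B[X,Y])=\operatorname{rank}(B[Y,X])$ already holds for symmetric $B$.
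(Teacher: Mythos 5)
Your part~\textbf{iii)} matches the paper's route exactly: the paper also observes that adding $\chi$ leaves the off-diagonal blocks untouched and then invokes the Hartfiel--Loewy block-rank proposition for the inverse. One small correction there: the identity is rank-\emph{preserving}, $\operatorname{rank}(B^{-1}[X,Y])=\operatorname{rank}(B[X,Y])$, not rank-swapping as you wrote (take $T=\left(\begin{smallmatrix}1&1\\0&1\end{smallmatrix}\right)$ to see that $\operatorname{rank}(T_{12})\neq\operatorname{rank}(T_{21})$ in general); this does not damage your conclusion, since the HL-bipartition condition bounds both blocks simultaneously, but you should also note that the hypothesis ``$T_{11}$ invertible'' holds here because $\det\bigl((A+\chi)[X,X]\bigr)$ contains the monomial $\prod_{k\in X}x_k$.

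The genuine gap is in part~\textbf{ii)}, and you have flagged it yourself. The paper does not prove i) and ii) at all: it cites Theorem~1 of \cite{HL}, where this is the main technical content. Your sketch for the off-diagonal cofactor does not go through as stated. First, ``pick up the factor $a_{ij}$'' fails outright when $a_{ij}=0$: irreducibility only says the \emph{blocks} $A[X,[n]\setminus X]$ and $A[[n]\setminus X,X]$ are nonzero, equivalently that the digraph of nonzero off-diagonal entries is strongly connected, so what you get is a \emph{path} $i=k_0\to k_1\to\cdots\to k_m=j$ of nonzero entries, not a nonzero entry $a_{ij}$. Second, even with such a path, the term $\pm\,a_{k_0k_1}\cdots a_{k_{m-1}k_m}\prod_{k\notin\{k_0,\dots,k_m\}}x_k$ is not obviously uncancelled: the coefficient of $\prod_{k\notin T}x_k$ in the $(j,i)$ cofactor is (up to sign) $\det\bigl(A[T\setminus\{j\},T\setminus\{i\}]\bigr)$, which is a signed sum over \emph{all} paths from $i$ to $j$ with vertex set $T$ (plus cycle contributions for non-minimal $T$), and distinct paths through the same vertex set carry opposite signs and can cancel. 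Handling this cancellation --- by a careful choice of a minimal $T$ --- is exactly what the proof of \cite[Theorem~1]{HL} does, so either carry out that argument in full or simply cite it as the paper does. As it stands, your ``presumably'' is doing the work of the hardest step.
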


For assertions i) and ii) of this lemma, see Theorem 1 of \cite{HL}. The third assertion is a direct consequence of the following Proposition. 

\begin{prop}
\cite{HL} Let $T$ be an invertible matrix over $\mathbb{K}$, and
suppose it has a block form

$T=\left(
\begin{array}
[c]{cc}%
T_{11} & T_{12}\\
T_{21} & T_{22}%
\end{array}
\right)  $

where $T_{11}$ is an invertible $k\times k$ matrix. Let $W=T^{-1}$, and
partition $W$ conformably with $T$, so

$W=\left(
\begin{array}
[c]{cc}%
W_{11} & W_{12}\\
W_{21} & W_{22}%
\end{array}
\right)  $

Then $rank(W_{12})=rank(T_{12})~$and $rank(W_{21})=rank(T_{21})$.
\end{prop}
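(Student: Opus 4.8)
The plan is to reduce everything to the Schur complement of $T_{11}$ and then exploit the fact that multiplication by an invertible matrix does not change rank. Since $T_{11}$ is invertible, set $S := T_{22} - T_{21} T_{11}^{-1} T_{12}$, the Schur complement of $T_{11}$ in $T$. First I would record the block factorization
\[
T = \begin{pmatrix} I & 0 \\ T_{21}T_{11}^{-1} & I \end{pmatrix}
\begin{pmatrix} T_{11} & 0 \\ 0 & S \end{pmatrix}
\begin{pmatrix} I & T_{11}^{-1}T_{12} \\ 0 & I \end{pmatrix},
\]
which is verified by a direct block multiplication. Taking determinants gives $\det T = \det T_{11} \cdot \det S$, so from the hypotheses that $T$ and $T_{11}$ are both invertible it follows that $S$ is invertible as well.

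Next I would invert the factorization. The two triangular factors have trivial inverses, obtained by negating their off-diagonal blocks, and multiplying the three inverses in reverse order yields the classical closed form
\[
W = T^{-1} = \begin{pmatrix}
T_{11}^{-1} + T_{11}^{-1} T_{12} S^{-1} T_{21} T_{11}^{-1} & -T_{11}^{-1} T_{12} S^{-1} \\
-S^{-1} T_{21} T_{11}^{-1} & S^{-1}
\end{pmatrix}.
\]
Reading off the conformable blocks of $W$ gives $W_{12} = -T_{11}^{-1} T_{12} S^{-1}$ and $W_{21} = -S^{-1} T_{21} T_{11}^{-1}$.

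The conclusion is then immediate: both $T_{11}^{-1}$ and $S^{-1}$ are invertible, and multiplying a matrix on the left or on the right by an invertible matrix preserves its rank. Hence $rank(W_{12}) = rank(-T_{11}^{-1} T_{12} S^{-1}) = rank(T_{12})$, and likewise $rank(W_{21}) = rank(-S^{-1} T_{21} T_{11}^{-1}) = rank(T_{21})$, which is the desired pair of equalities.

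There is no serious obstacle here; the only point that needs care is the invertibility of the Schur complement $S$, drawn from the determinant identity, since it is exactly this invertibility that licenses the rank-preservation step at the end. If one prefers to avoid writing out the full inverse, an alternative is to multiply $W$ on the left by the lower-triangular factor of the factorization and on the right by the upper-triangular factor so as to isolate the anti-diagonal blocks directly, but the explicit formula above makes the two rank identities most transparent.
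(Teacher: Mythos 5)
Your proof is correct and complete: the block factorization is verified by direct multiplication, the invertibility of the Schur complement $S=T_{22}-T_{21}T_{11}^{-1}T_{12}$ follows from $\det T=\det T_{11}\cdot\det S$ (or directly from the factorization, since the two unitriangular factors are invertible), and the rank equalities are immediate once one reads off $W_{12}=-T_{11}^{-1}T_{12}S^{-1}$ and $W_{21}=-S^{-1}T_{21}T_{11}^{-1}$. The paper itself gives no proof, citing Hartfiel and Loewy, and your Schur-complement argument is the standard one used there, so there is nothing to add.
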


\begin{pop}
 We will use the notations of Lemma
\ref{irreandinvers}. Let $(A+\chi)^{-1}:=[b_{ij}]_{1\leq i,j\leq \text{ }n}$,
$D:=[d_{i}]_{1\leq i\leq \text{ }n}$ and $D^{\prime}:=[d_{i}^{\prime}]_{1\leq
i\leq \text{ }n}$ where $d_{i}=\frac{1}{b_{iv}}$ , $d_{i}^{\prime}=\frac
{1}{b_{vi}}$ for $i\neq v$ and $d_{v}=d_{v}^{\prime}=1$. Clearly, the matrix
$\widehat{A}:=D(A+X)^{-1}D^{\prime}$ is $v$-normalized \ and its entries are
in $\widehat{\mathbb{K}}=\mathbb{K}(x_{1},x_{2},\ldots,x_{n})$. Moreover, if
$A$ is symmetric then $A+\chi$ and $(A+\chi)^{-1}$are also symmetric. It
follows that $D=D^{\prime}$ and hence $\widehat{A}$ is symmetric. We conclude
by applying iii) of Lemma \ref{irreandinvers} and the following lemma.
\end{pop}

\begin{lem}
\label{memHL-clan}Let $M$ be a $n\times n$ matrix and let $D_{1}$, $D_{2}$ be
two $n\times n$ diagonal and invertible matrices. Then, the matrices $M$ and
$D_{1}MD_{2}$ have the same HL-bipartitions.
\end{lem}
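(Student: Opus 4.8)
The plan is to show that left and right multiplication by invertible diagonal matrices preserves the rank of every submatrix of the form $M[X,Y]$ and $M[Y,X]$, which immediately preserves the HL-bipartition property. The key observation is that if $D_1 = \mathrm{diag}(c_1,\ldots,c_n)$ and $D_2 = \mathrm{diag}(e_1,\ldots,e_n)$, then the $(i,j)$ entry of $D_1 M D_2$ is $c_i m_{ij} e_j$. Consequently, for any nonempty subsets $X,Y \subseteq [n]$, the submatrix $(D_1 M D_2)[X,Y]$ is obtained from $M[X,Y]$ by multiplying each row indexed by $i \in X$ by the nonzero scalar $c_i$ and each column indexed by $j \in Y$ by the nonzero scalar $e_j$.

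First I would make this factorization precise: writing $D_1[X] := \mathrm{diag}(c_i : i \in X)$ and $D_2[Y] := \mathrm{diag}(e_j : j \in Y)$ for the principal diagonal submatrices, one has the identity
\[
(D_1 M D_2)[X,Y] = D_1[X]\, M[X,Y]\, D_2[Y].
\]
Since $D_1$ and $D_2$ are invertible diagonal matrices, all the scalars $c_i$ and $e_j$ are nonzero, so $D_1[X]$ and $D_2[Y]$ are themselves invertible. Multiplying a matrix on the left and right by invertible matrices does not change its rank, hence
\[
\mathrm{rank}\big((D_1 M D_2)[X,Y]\big) = \mathrm{rank}\big(M[X,Y]\big).
\]

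Next I would invoke the definition of HL-bipartition. A bipartition $\{X,Y\}$ of $[n]$ is an HL-bipartition of a matrix $N$ exactly when both $N[X,Y]$ and $N[Y,X]$ have rank at most $1$. Applying the rank identity above to both $(X,Y)$ and $(Y,X)$, we get that $\mathrm{rank}\big((D_1 M D_2)[X,Y]\big) \le 1$ if and only if $\mathrm{rank}\big(M[X,Y]\big) \le 1$, and similarly for the transposed blocks. Therefore $\{X,Y\}$ is an HL-bipartition of $D_1 M D_2$ if and only if it is an HL-bipartition of $M$, which is precisely the claim.

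This argument is essentially self-contained and I do not anticipate a genuine obstacle; the only point requiring minor care is the bookkeeping of the submatrix factorization identity, namely confirming that restricting a diagonal conjugation to a block $[X,Y]$ yields exactly the diagonal factors indexed by $X$ on the left and by $Y$ on the right. Once that identity is stated cleanly, the invariance of rank under multiplication by invertible matrices finishes the proof without any case analysis or appeal to the irreducibility or symmetry of $M$.
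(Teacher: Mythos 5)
Your proof is correct and follows essentially the same route as the paper: both factor $(D_1MD_2)[X,Y]$ as $D_1[X]\,M[X,Y]\,D_2[Y]$ and use invariance of rank under multiplication by invertible (diagonal) matrices. No issues.
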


\begin{proof}
Let $X,Y$ be two subset of $\left[  n\right]  $. We have the following
equalities:
\begin{align*}
(D_{1}MD_{2})\left[  X,Y\right]   &  =(D_{1}\left[  X\right]  )(M\left[
X,Y\right]  )(D_{2}\left[  Y\right]  )\\
(D_{1}MD_{2})\left[  Y,X\right]   &  =(D_{1}\left[  Y\right]  )(M\left[
Y,X\right]  )(D_{2}\left[  X\right]  )
\end{align*}

It follows $(D_{1}MD_{2})\left[  X,Y\right]  $ and $(M\left[  X,Y\right]  )$
(resp. $(D_{1}MD_{2})\left[  Y,X\right]  $ and $M\left[  Y,X\right]  $) have
the same rank because the matrices $D_{1}\left[  X\right]  $, $D_{2}\left[
X\right]  $, $D_{1}\left[  Y\right]  $ and $D_{2}\left[  Y\right]  $ are
invertible. Thus, $\left \{  X,Y\right \}  $ is an HL-bipartition of $M$ if and
only if it is one for $D_{1}MD_{2}$.
\end{proof}

\begin{pot}
The fact that $\mathcal{H}_{A}$ is weakly bipartitive follows from Lemma 1
of \cite{Lw86}. To complete the
proof it suffices to check \ that $\mathcal{H}_{A}$ satisfies the condition
\textbf{Q3}.\textbf{ }For this,\textbf{ }let $\left \{  X,Y\right \}  $,
$\left \{  X^{\prime},Y^{\prime}\right \}  \in \mathcal{H}_{A}$ which overlap.
Then$\  \left[  n\right]  \setminus(X\cup X^{\prime})=Y\cap Y^{\prime}%
\  \neq \emptyset \ $. Let $i\  \in \left[  n\right]  \  \setminus(X\cup X^{\prime
})$. By Proposition \ref{key coeff non nul}, there is a symmetric and
$i$-normalized matrix $\widehat{A}$ such that $\  \mathcal{H}_{A}%
=\mathcal{H}_{\widehat{A}}$. So it suffices to prove that $\left \{  X\Delta
X^{\prime},X\Delta Y^{\prime}\right \}  \in \mathcal{H}_{\widehat{A}}$. By the
choice of $i$, we have $i\notin X$ and $i\notin X^{\prime}$ and then by Lemma
\ref{BCgeneral} $X$ and $X^{\prime}$ are clans of $g_{\widehat{A}}\left[
\left[  n\right]  \setminus \left \{  i\right \}  \right]  $. Moreover, $X$ and
$X^{\prime}$ overlap because $\left \{  X,Y\right \}  $, $\left \{  X^{\prime
},Y^{\prime}\right \}  \in \mathcal{H}_{A}$  overlap. Now, since $\widehat{A}$
is symmetric, $g_{\widehat{A}}\left[  \left[  n\right]  \setminus \left \{
i\right \}  \right]  $ is symmetric and then by \textbf{P3}, $X\Delta
X^{^{\prime}}$ is a clan of $g_{\widehat{A}}\left[  \left[  n\right]
\setminus \left \{  i\right \}  \right]  $. By applying again Lemma
\ref{BCgeneral}, we deduce that $\left \{  X\bigtriangleup X^{\prime
},X\bigtriangleup Y^{\prime}\right \}  \in \mathcal{H}_{\widehat{A}}$.

Conversely, let $\mathcal{F}$ be a weakly bipartitive family on a set $[n]$. We
will construct an irreducible matrix $A$ with entries in $\{-1,0,1\}$ such
that $\mathcal{F}=$ $\mathcal{H}_{A}$. From Lemma \ref{equipartibipartiRao}
the family $\mathcal{P}:=\left \{  X\subseteq \left[  n-1\right]
:\{X,[n]\setminus X\} \in \mathcal{F}\right \}  $ is weakly partitive, then by
Theorem \ref{thmPille}, there exists an $l2$-structure $g$ on $\left[
n-1\right]  $ with labels in $\{-1,0,1\}$ such that $\mathcal{P=}Cl(g)$.
Consider the following matrix
\[
A=\left(
\begin{tabular}
[c]{ccc|c}
&  &  & $1$\\
& $M(g)$ &  & $\vdots$\\
&  &  & $1$\\ \hline
$1$ & $\cdots$ & $1$ & $0$%
\end{tabular}
\  \  \right)
\]

Clearly, this matrix is $n$-normalized and then it is irreducible. To prove
that $\mathcal{F}=$ $\mathcal{H}_{A}$, let $\{X,\left[  n\right]  \setminus
X\}$ be a bipartition of $\left[  n\right]  $ and assume for example that
$n\notin X$. By Lemma \ref{BCgeneral}, $\{X,\left[  n\right]  \setminus
X\} \in \mathcal{H}_{A}$ if and only if $X$ is a clan of $g_{A}\left[
1,\ldots,n-1\right]  =g$. Then $\{X,\left[  n\right]  \setminus X\} \in
\mathcal{H}_{A}$ if and only if $X\in$ $\mathcal{P}$ or equivalently
$\{X,[n]\setminus X\} \in \mathcal{F}$ because $\mathcal{P=}Cl(g)$.

Now if $\mathcal{F}$ is bipartitive, then the family $\mathcal{P}:=\left \{
X\subseteq \left[  n-1\right]  :\{X,[n]\setminus X\} \in \mathcal{F}\right \}  $
is partitive. By Theorem \ref{thmPille}, we can choose $g$ symmetric, which
implies that $A$ is symmetric.
\qed\end{pot}

\end{document}